\title{On Ulrich bundles on projective bundles}
\author{Andreas Hochenegger}
\address{Politecnico di Milano, Dipartimento di Matematica ``F. Brioschi'', Italia}
\email{andreas.hochenegger@polimi.it}
\begin{document}

\maketitle

\begin{abstract}
In this article, the existence of Ulrich bundles on projective bundles $\bP(\cE) \to X$ is discussed. 
In the case, that the base variety $X$ is a curve or surface, a close relationship between Ulrich bundles on $X$ and those on $\bP(\cE)$ is established for specific polarisations.
This yields the existence of Ulrich bundles on a wide range of projective bundles over curves and some surfaces.
\end{abstract}

\section{Introduction}

Given a smooth projective variety $X$, polarised by a very ample divisor $A$,
let $i \colon X \into \bP^N$ be the associated closed embedding.
A locally free sheaf $\cF$ on $X$ is called \emph{Ulrich bundle} (with respect to $A$) if and only if it satisfies one of the following conditions:
\begin{itemize}
\item There is a linear resolution of $\cF$:
\[
0 \to \cO_{\bP^N}(-c)^{\oplus b_c} \to  \cO_{\bP^N}(-c+1)^{\oplus b_{c-1}} \to \cdots \to  \cO_{\bP^N}^{\oplus b_0} \to i_* \cF \to 0,
\]
where $c$ is the codimension of $X$ in $\bP^N$.
\item The cohomology $\sH^\bullet(X, \cF(-pA))$ vanishes for $1 \leq p \leq \dim(X)$.
\item For any finite linear projection $\pi \colon X \to \bP^{\dim(X)}$, the locally free sheaf $\pi_* \cF$ splits into a direct sum of $\cO_{\bP^{\dim(X)}}$.
\end{itemize}
Actually, by \cite{Eisenbud-Schreyer-Weyman}, these three conditions are equivalent.
One guiding question about Ulrich bundles is whether a given variety admits an Ulrich bundle of low rank.
The existence of such a locally free sheaf has surprisingly strong implications about the geometry of the variety, see the excellent surveys \cite{Beauville-Intro, Coskun-Intro}.

Given a projective bundle $\pi \colon \bP(\cE) \to X$, this article deals with the question, what is the relation between Ulrich bundles on the base $X$ and those on $\bP(\cE)$?
Note that answers to such a question depend much on the choice of a very ample divisor.
Here we consider very ample divisors $D = \pi^* A + H$, where $H$ is the relative hyperplane section of $\pi \colon \bP(\cE) \to X$.
In this case, we ask when a locally free sheaf of the form $\pi^* \cF (D)$ is an Ulrich bundle with respect to $D$.
By \autoref{prop:bundles-general}, this can be expressed solely in terms of cohomology vanishing on the base $X$.
If the base variety is a curve, then, essentially as a corollary, we obtain the following statement, see \autoref{thm:bundles-curves} and \autoref{rem:bundles-curves}.

\begin{theoremalpha}
\label{main:curve}
Let $\pi \colon \bP(\cE) \to C$ be a projective bundle over a smooth projective curve,
and let $D = \pi^* A + H$ be very ample.
Then a locally free sheaf $\pi^* \cF (D)$ is Ulrich if and only if $\sH^\bullet(C,\cF)=0$.

In particular, there are Ulrich line bundles on $\bP(\cE)$ with respect to $D$.
\end{theoremalpha}

Note that, for any very ample divisor $A'$ on $C$, the locally free sheaf $\cF$ gives rise to the Ulrich bundle $\cF(A')$ on $C$.
This result is a generalisation of \cite[Thm. 2.1]{Aprodu-etal}, where the existence of Ulrich line bundles on $\bP^1$-bundles over curves is shown.

If the base variety is a surface, then the connection between Ulrich bundles on $\bP(\cE)$ and those on the base becomes weaker.
There we have the following statement, see \autoref{thm:bundles-surfaces} and \autoref{cor:P2} \& \ref{cor:hirzebruch}.

\begin{theoremalpha}
\label{main:surface}
Let $\pi \colon \bP(\cE) \to S$ be a projective bundle over a smooth projective surface,
and let $D = \pi^*A + H$ be very ample.
Then a locally free sheaf $\pi^* \cF(D)$ is Ulrich if and only if 
\[
\sH^\bullet(S,\cF)=0 = \sH^\bullet(S, \cF \otimes \cO_S(-D')),
\]
where $D' = \rank(\cE) A + c_1(\cE)$.

In particular, there are Ulrich bundles of rank two with respect to $D$, if $S$ is the projective plane or a Hirzebruch surface.
\end{theoremalpha}

Note that if $D'$ is very ample on $S$, the locally free sheaf $\cF(D')$ from \autoref{main:surface} is an Ulrich bundle on $S$ with respect to $D'$.
So whenever there are Ulrich bundles known on the surface (for many polarisations),
then this implies the existence of an Ulrich bundle on $\bP(\cE)$.
By this we generalise results of \cite{Fania-etal} about the existence of Ulrich bundles on $\bP^1$-bundles over certain surfaces.
Note that the result here is not optimal: 
for example, under specific conditions on the Chern classes of a locally free sheaf $\cE$ of rank two,
the corresponding $\bP^1$-bundles over $\bP^2$ admits Ulrich line bundles, see \cite[Prop. 5.1]{Fania-etal}.

\marginpar{{\small The proof  of \autoref{prop:ulrich-bundle-pn} contains an error, see the Erratum on page \pageref{Erratum} for further details.}}
We also obtain the existence of Ulrich bundles on blowups of projective space in a linear subspace, using that such a blowup carries the structure of a projective bundle, see \autoref{prop:ulrich-bundle-pn}.

Finally we note that the method presented here may serve to obtain easily more Ulrich bundles on projective bundles over surfaces different from the projective plane or a Hirzebruch surface, see also \autoref{rem:further-surfaces}. 
But we also want to point out that the approach used here, does not generalise well for polarisations of the form $D = \pi^*A+nH$ with $n>1$.

\subsection*{Conventions}

Tensor products of a sheaf $\cF$ and a line bundle $\cO_X(D)$ are abbreviated as $\cF(D) \coloneqq \cF \otimes \cO_X(D)$.

For two sheaves $\cF$ and $\cG$ on a variety $X$, we  write $\Hom^\bullet(\cF,\cG)$ for the Hom complex
$\bigoplus_i \Ext^i_X(\cF,\cG)[-i]$,
which is a complex of vector spaces with zero differentials.
We use similar notation for the cohomology of a sheaf $\cF$, so $H^\bullet(X,\cF) = \Hom^\bullet(\cO_X,\cF)$.

Finally, all varieties here will be smooth and projective over an algebraically closed field of characteristic zero.

\subsection*{Acknowledgements}

The author wants to thank Klaus Altmann, Gianfranco Casnati, Francesco Malaspina, Peter Newstead, Joan Pons-Llopis and an anonymous referee for useful suggestions and comments.

\section{Preliminaries}

For the purposes of this work, we recall a cohomological characterisation of Ulrich bundles.

\begin{definition}
Let $X$ be a smooth projective variety
and $A$ a very ample divisor on $X$.
Then a locally free sheaf $\cF$ is called \emph{Ulrich bundle} with respect to $A$ if
\[
\sH^\bullet(X, \cF(-iA)) = 0
\]
for $i=1,\ldots,\dim(X)$.
\end{definition}

\begin{remark}
\label{rem:Serre-Ulrich}
If $\cF$ is an Ulrich bundle with respect to a very ample $A$ on a smooth projective variety $X$ of dimension $n$,
then also $\cF^\vee (K_X+(n+1)A)$ is Ulrich.
To see this note that by Serre duality
\[
\begin{split}
\sH^\bullet\bigl(X,\cF^\vee(K_X+(n+1-p)A\bigr) &=
\Hom^{n-\bullet}\bigl(\cF^\vee((n+1-p)A),\cO_X\bigr)^\vee = \\
& = \sH^{n-\bullet}\bigl(X,\cF((p-n-1)A)\bigr)^\vee
\end{split}
\]
and use that $\cF$ is Ulrich.
So whenever we construct an Ulrich bundle, we get automatically a second one.

Note that $\cF$ can coincide with $\cF^\vee (K_X+(n+1)A)$.
For example, if $\cF$ is of rank two, then $\cF^\vee \cong \cF(-c_1(\cF))$.
Hence for $\cF$ Ulrich with $c_1(\cF) = K_X + (n+1)A$, we find that
$\cF^\vee (K_X+(n+1)A) \cong \cF$.
If $X$ is a surface, then such Ulrich bundles are called \emph{special}; see also \cite{Eisenbud-Schreyer-Weyman, Casnati-q0}.
\end{remark}

\subsection{Projective bundles}

In the following, we collect some facts about projective bundles,
as can be found in \cite{Hartshorne}.

\begin{lemma}
\label{lem:bundles-canonical}
The canonical divisor of a projective bundle $\pi \colon \bP(\cE) \to X$ is given by
\[
K_{\bP(\cE)} = \pi^*(K_X + c_1(\cE)) - \rank(\cE)H,
\]
where $H$ is the relative hyperplane section.
\end{lemma}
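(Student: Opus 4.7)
The plan is to derive the canonical divisor formula from the relative Euler sequence together with the relative cotangent sequence, which is the standard textbook approach. In Grothendieck's convention, $\bP(\cE)$ parametrises rank-one quotients of $\cE$, and there is a tautological surjection $\pi^*\cE \twoheadrightarrow \cO_{\bP(\cE)}(H)$. First I would write down the relative Euler sequence
\[
0 \to \Omega_{\bP(\cE)/X} \to \pi^*\cE(-H) \to \cO_{\bP(\cE)} \to 0,
\]
obtained by twisting the kernel sequence of the tautological quotient by $\cO(-H)$. Taking determinants yields
\[
\det \Omega_{\bP(\cE)/X} = \det\bigl(\pi^*\cE(-H)\bigr) = \pi^* \det(\cE) \otimes \cO_{\bP(\cE)}(-\rank(\cE) H),
\]
where I use that for a rank-$r$ bundle $\cF$ and a line bundle $L$, one has $\det(\cF \otimes L) = \det(\cF) \otimes L^{\otimes r}$; here $r = \rank(\cE)$ and the dimension count $\rank \Omega_{\bP(\cE)/X} = r-1$ matches.

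Next I would invoke the relative cotangent sequence for the smooth morphism $\pi$,
\[
0 \to \pi^*\Omega_X \to \Omega_{\bP(\cE)} \to \Omega_{\bP(\cE)/X} \to 0,
\]
and again take determinants. Since $\pi^*\Omega_X$ has rank $\dim X$ and $\Omega_{\bP(\cE)/X}$ has rank $\rank(\cE)-1$, and since determinants are multiplicative in short exact sequences of locally free sheaves, I obtain
\[
\omega_{\bP(\cE)} = \pi^*\omega_X \otimes \det \Omega_{\bP(\cE)/X}.
\]
Substituting the formula for $\det \Omega_{\bP(\cE)/X}$ from the previous step and translating to divisor class notation gives
\[
K_{\bP(\cE)} = \pi^*K_X + \pi^* c_1(\cE) - \rank(\cE) H = \pi^*(K_X + c_1(\cE)) - \rank(\cE)\, H,
\]
as claimed.

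There is no real obstacle here; the only thing to watch is the sign/twist convention in the Euler sequence, since the statement uses Grothendieck's convention (where $\cO(1)$ corresponds to quotients rather than sub-line-bundles). Getting this convention right is what produces the $-\rank(\cE) H$ coefficient rather than a $+1$ shift or an opposite sign. Both key sequences above and the verification that they split the determinant computation cleanly are standard, so the proof essentially amounts to assembling them.
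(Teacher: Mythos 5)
Your proof is correct and follows essentially the same route as the paper: the paper also splits $K_{\bP(\cE)}$ as $\pi^*K_X + K_{\bP(\cE)|X}$ via the relative cotangent sequence and simply cites \cite[Ex. III.8.4]{Hartshorne} for the identity $K_{\bP(\cE)|X} \sim \pi^* c_1(\cE) - \rank(\cE)H$, which is exactly what you derive by hand from the relative Euler sequence. Your version just makes the cited exercise explicit, with the correct Grothendieck-convention twist.
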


Recall that $\pi$ is a flat morphism, so we do not run into problems pulling back divisors.

\begin{proof}
The relative canonical divisor for $\pi$ is given by
\[
K_{\bP(\cE)} \sim \pi^* K_X + K_{\bP(\cE)|X}.
\]
By \cite[Ex. III.8.4]{Hartshorne}, we have 
\[
K_{\bP(\cE)|X} \sim \pi^* c_1(\cE) - \rank(\cE)H,
\]
from which follows the statement.
\end{proof}

\begin{lemma}[{\cite[Prop. II.7.10]{Hartshorne}}]
\label{lem:bundles-veryample}
Let $\pi \colon \bP(\cE) \to X$ be a projective bundle over a smooth projective variety $X$
with relative hyperplane section $H$.
If $A$ is an ample divisor on $X$, then there is an $n > 0$ such that $\pi^*(nA) + H$ is very ample.
\end{lemma}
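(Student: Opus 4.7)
The statement is a general fact about projective bundles, so the plan is to follow the standard argument that a \emph{relatively} very ample line bundle, twisted by the pullback of a sufficiently high power of an ample one, becomes very ample. The key input is that $H$ is relatively very ample for $\pi$: by construction of $\bP(\cE)$, on each fibre $F_x \cong \bP^{\rank(\cE)-1}$ the restriction $\cO_{\bP(\cE)}(H)|_{F_x}$ is $\cO(1)$, while $\pi_*\cO_{\bP(\cE)}(H) \cong \cE$ (up to convention on $\bP(\cE)$).

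Concretely, I would argue as follows. By the projection formula,
\[
\pi_* \cO_{\bP(\cE)}\bigl(\pi^*(nA) + H\bigr) \cong \cE \otimes \cO_X(nA),
\]
which for $n \gg 0$ is globally generated by Serre's theorem, while $\cO_X(nA)$ itself is very ample on $X$. To see that the global sections of $\cO_{\bP(\cE)}(\pi^*(nA)+H)$ separate points, I split into cases: if $\pi(p) \neq \pi(q)$, multiply a section of $\cO_X(nA)$ that separates $\pi(p)$ from $\pi(q)$ by a local generator of $\cE$ non-vanishing near $p$, and pull back; if $p,q$ lie in a common fibre $F_x$, use that the restriction map $H^0(\bP(\cE), \cO_{\bP(\cE)}(\pi^*(nA)+H)) \to H^0(F_x, \cO(1))$ is surjective for $n$ large (again by Serre vanishing applied to the twist by the ideal of $F_x$), together with the very ampleness of $\cO(1)$ on $F_x$. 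Separation of tangent vectors is handled by the same dichotomy: vertical tangent vectors are separated fibrewise via $H$, while horizontal ones are separated via $\pi^*(nA)$ using the very ampleness of $nA$ on $X$.

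The main obstacle, as far as I can see, is not mathematical depth but bookkeeping: combining the fibrewise separation provided by $H$ with the base separation provided by $\pi^*(nA)$ into a single globally very ample embedding, without inadvertently proving less than one needs. The cleanest formulation is the EGA-style principle that, for a projective morphism $f\colon Y \to X$, a relatively very ample line bundle tensored with the pullback of a sufficiently high tensor power of an ample line bundle on $X$ is very ample on $Y$. Applied with $f = \pi$, $\cL = \cO_{\bP(\cE)}(H)$ and ample base divisor $A$, this is exactly the claim of the lemma, so in a written proof I would either cite this principle directly or execute the point/tangent-vector verification sketched above.
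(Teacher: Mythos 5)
Your argument is correct: this is the standard ``relatively very ample twisted by the pullback of a sufficiently ample divisor'' principle, and the paper itself gives no proof, simply citing Hartshorne, Prop.~II.7.10. Your sketch (global generation of $\cE\otimes\cO_X(nA)$ for $n\gg 0$ plus the fibre/base dichotomy for separating points and tangent vectors) is the usual proof of that cited fact, so there is nothing to add beyond tightening the bookkeeping you already flag.
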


Finally, we give the main cohomological ingredient to establish much of the cohomology vanishing.

\begin{proposition}[\cite{Orlov}]
\label{prop:orlov}
Let $\pi \colon \bP(\cE) \to X$ be a projective bundle over a smooth projective variety $X$.
Then there is a semi-orthogonal decomposition
\[
\Db(\bP(\cE)) = \sod{ \pi^* \Db(X) \otimes \cO_\bP(-nH), \ldots, \pi^* \Db(X) \otimes \cO_\bP(-H), \pi^* \Db(X)},
\]
where $n+1 = \rank(\cE)$ and $H$ is the relative hyperplane section.
\end{proposition}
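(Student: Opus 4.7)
The plan is to check the three standard ingredients of a semi-orthogonal decomposition: each factor should be admissible (i.e.\ given by a fully faithful functor), the order should be compatible with Hom-vanishing, and together the factors should generate $\Db(\bP(\cE))$. All of this will be driven by one relative cohomology computation, so I would start by recording the fibrewise calculation
\[
R\pi_* \cO_{\bP}(mH) = \begin{cases} S^m \cE^\vee & m \geq 0, \\ 0 & -n \leq m \leq -1, \\ S^{-m-n-1} \cE \otimes \det(\cE)^\vee [-n] & m \leq -n-1, \end{cases}
\]
obtained by applying $\pi_*$ to the relative Euler/Koszul description of $\cO_{\bP}(mH)$. In particular $R\pi_* \cO_{\bP} = \cO_X$, which is the fact that does all the work below.

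Next I would establish that the functor $\Phi_k \coloneqq \pi^*(-) \otimes \cO_{\bP}(-kH) \colon \Db(X) \to \Db(\bP(\cE))$ is fully faithful for each $k$. Using the projection formula and adjunction,
\[
\Hom_{\bP}\bigl(\Phi_k(F), \Phi_k(G)\bigr) = \Hom_X\bigl(F, G \otimes R\pi_* \cO_{\bP}\bigr) = \Hom_X(F,G),
\]
so this is immediate from the vanishing/triviality in degree zero of $R\pi_*\cO_{\bP}$.

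For semi-orthogonality I would compute, for $0 \leq j < k \leq n$ and any $F,G \in \Db(X)$,
\[
\Hom_{\bP}\bigl(\Phi_j(G), \Phi_k(F)\bigr) = \Hom_X\bigl(G, F \otimes R\pi_* \cO_{\bP}(-(k-j)H)\bigr),
\]
and appeal to the middle case of the display above: since $1 \leq k-j \leq n$, the pushforward vanishes, giving the required orthogonality with respect to the prescribed ordering of the factors.

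Generation is where the real work sits and will be the main obstacle. The cleanest route is to construct a resolution of the structure sheaf of the relative diagonal $\Delta \subset \bP(\cE) \times_X \bP(\cE)$ whose terms are external products $\cO_{\bP}(-kH) \boxtimes_X \pi^*(\,\cdot\,)$ for $k = 0, \ldots, n$: concretely, the relative Koszul complex associated to the tautological section of $\Omega^1_{\bP(\cE)/X}(H) \boxtimes \cO(H)$ pulled back to the fibre product. Once this resolution is in place, writing any $E \in \Db(\bP(\cE))$ as $p_{2*}(p_1^* E \otimes \cO_\Delta)$ and replacing $\cO_\Delta$ by the resolution produces a filtration of $E$ with subquotients in the subcategories $\Phi_k(\Db(X))$, which gives generation. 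The fiddly part is bookkeeping this relative Beilinson resolution and checking the signs and twists; the rest of the argument is a mechanical consequence of the $R\pi_*$ computation above.
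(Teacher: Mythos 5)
The paper offers no proof of this proposition: it is imported verbatim from \cite{Orlov}, so there is no internal argument to compare against. Your sketch is the standard proof (Orlov's original argument, also \cite{Huybrechts}), and the first two steps are correct as written: full faithfulness of each $\Phi_k$ and the semi-orthogonality in the stated order both reduce, via the projection formula and adjunction, to $R\pi_*\cO_{\bP} = \cO_X$ and $R\pi_*\cO_{\bP}(-pH) = 0$ for $1 \leq p \leq n$, and your indexing matches the required direction of Hom-vanishing. One cosmetic caveat: the paper follows Hartshorne's convention, under which $\pi_*\cO_{\bP}(mH) = \sym^m\cE$ with no dual for $m \geq 0$ (this is used explicitly in the proof of \autoref{prop:bundles-general}), so your formula $S^m\cE^\vee$ belongs to the opposite convention for $\bP(\cE)$; it does not affect the decomposition, but it should be made consistent if this computation is to be reused elsewhere in the paper.

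The one genuine soft spot is the generation step. The terms of the relative Beilinson resolution of $\cO_\Delta$ are not of the form $p_1^*\cO_{\bP}(-kH) \otimes p_2^*\pi^*(\,\cdot\,)$; they are $p_1^*\cO_{\bP}(-kH) \otimes p_2^*\bigl(\Omega^k_{\bP(\cE)|X}(kH)\bigr)$, and the second factor is emphatically not pulled back from $X$. If the terms really were of the shape you describe, pushing forward along $p_2$ would place every subquotient of $E$ in the single component $\pi^*\Db(X)$, which cannot generate. To repair this you must either push forward along the projection whose factor carries the line-bundle twists, so that the subquotients become $\pi^*\bigl(\pi_*(E \otimes \Omega^k_{\bP(\cE)|X}(kH))\bigr) \otimes \cO_{\bP}(-kH)$ and land literally in $\Phi_k(\Db(X))$, or else add an induction (using the exterior powers of the relative Euler sequence) showing that each $\Omega^k_{\bP(\cE)|X}(kH)$ lies in the triangulated subcategory generated by $\pi^*\Db(X) \otimes \cO_{\bP}(-jH)$ for $0 \leq j \leq k$. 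This is precisely the bookkeeping you defer, and it is where the content of the generation claim actually sits.
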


\begin{remark}
For readers which are unfamiliar with derived categories, we want to point out the two consequences of the semi-orthogonal decomposition above.
\begin{enumerate}
\item 
for coherent sheaves $\cF, \cG$ on $X$ we get
\[
\Hom^\bullet(\pi^*\cF, \pi^*\cG(-pH)) = 0
\]
for $p = 1, \ldots, n$. This is the semi-orthogonality of the decomposition, and is crucial for much of the cohomology vanishing that we will need.
\item 
any coherent sheaf on $\bP(\cE)$ has an `approximation' by objects in the parts $\pi^* \Db(X) \otimes \cO_{\bP(\cE)}(-pH)$.
Put the other way, we can build sheaves on $\bP(\cE)$ using exact triangles whose terms are in these parts. This will be used in \autoref{lem:XXX}.
\end{enumerate}
For more information on derived categories, see the introductory text \cite{Hochenegger} or the excellent text book \cite{Huybrechts}.
\end{remark}

\begin{example}
\label{ex:hirzebruch}
Just to see this in the simplest case of a Hirzebruch surface $\bF_r = \bP(\cE)$ with $\cE = \cO_{\bP^1} \oplus \cO_{\bP^1}(r)$.
There we have two natural candidates for the relative hyperplane section, namely $C_\pm$ with the property that $C_\pm^2 = \pm r$.
Using a fibre $f$, we have that $C_+ \equiv rf + C_-$ and $\omega_{\bF_r} = \cO(-(r+2)f -2C_-) = \cO((r-2)f - 2C_+)$.
Here we find that $\pi^* K_{\bP^1} = -2f$ and $\pi^* \det(\cE) = \cO(rf)$.
Plugging all this into \autoref{lem:bundles-canonical}, we conclude that $H = C_+$.
Note that a divisor $D = af+bC_+$ on $\bF_r$ is very ample if and only if $a,b>0$.

Finally we remark that \autoref{prop:orlov} is indifferent about the choice between $C_-$ and $C_+$. For this note that $\cO_{\bF_r}( C_+ ) = \cO_{\bF_r}( C_- ) \otimes \pi^* \cO_{\bP^1}(r)$ and $\blank \otimes \pi^* \cO_{\bP^1}(r)$ is an autoequivalence of $\pi^*\Db(\bP^1)$.
\end{example}

\section{Projective bundles over arbitrary varieties}

In this section, we treat the case of Ulrich bundles on projective bundles over arbitrary varieties. The following statement will be central for the next sections, where we specialise to varieties of low dimension.

\begin{proposition}
\label{prop:bundles-general}
Let $\pi \colon \bP(\cE) \to X$ be a projective bundle over a smooth projective variety $X$,
where $\cE$ has rank $n+1$.
For a very ample divisor $D = \pi^* A + H$ with $H$ the relative hyperplane section and
a locally free sheaf $\cF$ on $X$ with $\sH^\bullet(X,\cF) = 0$,
the locally free sheaf $\pi^* \cF(D)$ is Ulrich if and only if
\[
\Hom^\bullet_{X}\bigl(\sym^k \cE, \cF(-c_1(\cE)-(n+1+k)A)\bigr) = 0
\]
for $k=0,\ldots,\dim(X)-2$.
\end{proposition}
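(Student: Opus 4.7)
The plan is to translate the Ulrich condition on $\bP(\cE)$ into cohomology on $X$ via the projection formula, and then split the resulting conditions according to the known behaviour of $R\pi_*\cO_\bP(mH)$.

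Write $d = \dim X$, so $\dim \bP(\cE) = n + d$. By definition $\pi^*\cF(D)$ is Ulrich with respect to $D$ iff
\[
\sH^\bullet\bigl(\bP(\cE),\, \pi^*(\cF(-jA)) \otimes \cO_\bP(-jH)\bigr) = 0
\]
for $j = 0, 1, \ldots, n+d-1$. Since $\cF(-jA)$ is locally free, the projection formula rewrites each group as $\sH^\bullet(X,\, \cF(-jA) \otimes R\pi_*\cO_\bP(-jH))$. The key input is the standard calculation of $R\pi_*\cO_\bP(mH)$ for the projective bundle (\cite[Ex.~III.8.4]{Hartshorne} together with relative Serre duality): this complex equals $\sym^m \cE$ in degree $0$ for $m \geq 0$, vanishes identically for $-n \leq m \leq -1$, and is concentrated in degree $n$ with value $\sym^{-m-n-1}\cE^\vee \otimes \det(\cE)^{-1}$ for $m \leq -(n+1)$.

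These three regimes match the three ranges of $j$. For $j = 0$ one gets $\sH^\bullet(X,\cF)$, which vanishes by the standing assumption. For $1 \leq j \leq n$ the direct image is zero, so the vanishing is automatic (this is also exactly the semi-orthogonality of \autoref{prop:orlov}). For $n+1 \leq j \leq n+d-1$ set $k = j-n-1$, so $k$ runs over $0, 1, \ldots, d-2$; the condition becomes (up to a cohomological shift by $n$ that does not affect vanishing)
\[
\sH^\bullet\bigl(X, \cF(-(n+1+k)A) \otimes \sym^k \cE^\vee \otimes \det(\cE)^{-1}\bigr) = 0.
\]
Using $\det(\cE) \cong \cO_X(c_1(\cE))$ and the characteristic-zero identification $(\sym^k \cE)^\vee \cong \sym^k \cE^\vee$, this coincides with $\Hom^\bullet_X(\sym^k \cE,\, \cF(-c_1(\cE)-(n+1+k)A)) = 0$, as in the statement.

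The only non-mechanical ingredient is the computation of $R^n\pi_*\cO_\bP(-jH)$ for $j \geq n+1$, where relative Serre duality has to be applied correctly; the rest is bookkeeping. As a sanity check, the index range $k = 0, \ldots, d-2$ is empty for $d = 1$, recovering \autoref{main:curve}, and contains only $k = 0$ for $d = 2$, in which case the condition reduces to $\sH^\bullet(S, \cF(-D')) = 0$ with $D' = (n+1)A + c_1(\cE)$, matching \autoref{main:surface}.
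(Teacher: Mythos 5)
Your argument is correct, and its skeleton is the same as the paper's: split the range of twists $j=0,\ldots,n+\dim X-1$ into the three regimes $j=0$, $1\le j\le n$, and $j\ge n+1$, and identify the last regime with the $\sym^k\cE$ conditions. The difference is in how the last regime is handled. The paper never computes $R^n\pi_*\cO_\bP(-jH)$ for $j\ge n+1$: it applies global Serre duality on $\bP(\cE)$ (using \autoref{lem:bundles-canonical} to rewrite $-(n+1)H$ in terms of $K_{\bP(\cE)}$), then adjunction $\pi^*\dashv R\pi_*$ to land on $X$ with the \emph{positive} twist $\pi_*\cO_\bP(kH)=\sym^k\cE$, and finally Serre duality on $X$. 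You instead invoke the derived projection formula together with the full computation of $R\pi_*\cO_\bP(mH)$ for $m\le -(n+1)$ via relative Serre duality, which is the ``composite'' of the paper's two global dualities; the middle regime is for you the vanishing of $R\pi_*\cO_\bP(-jH)$ for $1\le j\le n$, which is exactly the semi-orthogonality in \autoref{prop:orlov} that the paper quotes. Both routes are standard and complete; yours is slightly more self-contained at the price of having to get the relative dualizing sheaf and the identification $(\sym^k\cE)^\vee\cong\sym^k(\cE^\vee)$ (characteristic zero) right, which you do, while the paper's route only ever uses $\pi_*$ of non-negative twists.
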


This proposition generalises \cite[Thm. 2.1]{Fania-etal}, where essentially this characterisation is given for Ulrich line bundles on projective bundles.

\begin{proof}
We abbreviate $\bP \coloneqq \bP(\cE)$ in the following.
By \autoref{prop:orlov}, we have a semi-orthogonal decomposition
\[
\Db(\bP) = \sod{ \pi^* \Db(X) \otimes \cO_\bP(-nH), \ldots, \pi^* \Db(X) \otimes \cO_\bP(-H), \pi^* \Db(X)}.
\]
Let $\cF$ be a locally free sheaf on $X$ with $\sH^\bullet(X,\cF) = 0$.
So by the semi-orthogonal decomposition above we find that
\[
\sH^\bullet(\bP,\pi^* \cF (-iD)) = 0
\]
for $i=0,\ldots,n$. 
Note that here $\dim(\bP) = n+m$, where $m = \dim(X)$.
So this is not sufficient to conclude that $\pi^*\cF(D)$ is an Ulrich bundle.
We need additionally the vanishing of 
\[
\sH^\bullet\bigl(\bP,\pi^*\cF(-(n+1+k)D)\bigr).
\]
for $k=0,\ldots,m-2$.
By \autoref{lem:bundles-canonical} we get
\[
-(n+1)H = K_{\bP} - \pi^*(K_X + c_1(\cE))
\]
Using this for the above cohomology group we see that
\[
\begin{split}
&\sH^\bullet(\bP,\pi^* \cF(-(n+1+k)D)) = 
\\ &=
\Hom^\bullet_{\bP}\bigl(\cO_{\bP}, \pi^*(\cF(-K_X-c_1(\cE)-(n+1+k)A)) \otimes \cO_{\bP}(K_{\bP}-kH)\bigr) =
\\ &=
\Hom^\bullet_{\bP}\bigl(\pi^*(\cF(-K_X-c_1(\cE)-(n+1+k)A)), \cO_{\bP}(kH)[n+m]\bigr)^\vee =
\\ &=
\Hom^\bullet_{X}\bigl(\cF(-K_X-c_1(\cE)-(n+1+k)A), \pi_*\cO_{\bP}(kH)[n+m]\bigr)^\vee =
\\ &=
\Hom^\bullet_{X}\bigl(\sym^k \cE, \cF(-c_1(\cE)-(n+1+k)A)[n]\bigr)
\end{split}
\]
where we use Serre duality two times, adjunction and $\pi_* \cO_\bP(kH) = \sym^k \cE$; see \cite[Prop. II.7.11]{Hartshorne}.
Note that these equalities are only valid on the level of derived categories, but here $\pi^*(\blank)$ and $\pi_* \cO_{\bP}(kH)$ coincide with their derived counterparts.
From this follows the claim.
\end{proof}

\begin{remark}
\label{rem:split-bundle}
For a locally free sheaf $\cE$ that splits into line bundles, the additional cohomology vanishing can be checked more easily in examples.
Let $\cE = \bigoplus_{i=0}^n \cO(D_i)$ and $\pi \colon \bP(\cE) \to X$,
and moreover, let $D = \pi^*A + H$ be very ample.
By \autoref{prop:bundles-general}, the additional conditions on a locally free sheaf $\cF$ with $\sH^\bullet(X,\cF)=0$ to be Ulrich amount to the vanishing of
\[
\sH^\bullet\Biggl(X, \cF\Bigl(-\sum_{i=0}^n D_i - \sum_{j=1}^k D_{i_j} -(n+1+k)A\Bigr) \Biggr)
\]
for all $0 \leq i_1 < \cdots < i_k \leq n$ and $k = 0,\ldots,\dim(X)-2$.

A result based on this remark is \autoref{prop:ulrich-bundle-pn}.
\end{remark}

\begin{remark}
Let $\pi \colon \bP(\cE) \to X$ be a projective bundle and $D = \pi^* A + H$ very ample.
For $\cG$ an Ulrich bundle with respect to $D$, one might ask whether there is a locally free sheaf $\cF$ on $X$ with $\sH^\bullet(X,\cF) = 0$, such that $\cG = \pi^* \cF(D)$.

In the following, we use knowledge about Ulrich line bundles on Hirzebruch surfaces, see \autoref{ex:hirzebruch-ulrich} for details.
First note that for these Ulrich line bundles, the answer to the above question is positive.
Now, consider the polarisation $D = f + C_+ = 2f+C_-$ for $\bF_1$.
There is a non-trivial extension of two Ulrich line bundles:
\[
0 \to \cO_{\bF_1}(C_+) \to \cG \to \cO_{\bF_1}(2f) \to 0.
\]
By \cite[Prop. 5.4]{Casanellas-Hartshorne},
$\cG$ is an indecomposable Ulrich bundle of rank two, and there is no other such locally free sheaf on $\bF_1$.
But $\cG$ cannot be of the form $\pi^* \cF(D)$, as all locally free sheaves on $\bP^1$ split into line bundles.
So the answer to the question above is negative in general.

Note that $\cG$ is an extension of the Ulrich line bundles $\cO_{\bF_1}(C_+)$ and $\cO_{\bF_1}(2f)$ for the same polarisation.
So we can ask whether all Ulrich bundles $\cG$ with respect to a polarisation $\pi^*A + H$ arise as extensions of Ulrich bundles of the shape $\pi^*\cF(D)$. But even this is not the case in general.

To see this, we consider a rational normal scroll $\bP(\cE) \to \bP^1$ where $\cE = \bigoplus_{i=0}^n \cO_{\bP^1}(a_i)$ and $a_i > 0$.
By \cite[Lem. 3.2]{Aprodu-HML}, the locally free sheaves $\Omega^j_{\bP(\cE)|\bP^1}(j,j+1)$ are stable Ulrich bundles with respect to $H$, for $j=0,\ldots,n$.
Hence, for $j>0$, these Ulrich bundles are neither obtained by pullbacks twisted by a divisor nor extensions thereof.
\end{remark}

\section{Projective bundles over curves}
\label{sec:curves}

In this section, we specialise to the question about Ulrich bundles on projective bundles over curves.

The following lemma is well-known. For the convenience of the reader, we give a proof.

\begin{lemma}
\label{lem:curves-immaculate}
Let $C$ be a projective curve of genus $g$.
Then $\sH^\bullet(C,\cL) = 0$ for a general line bundle $\cL$ of degree $g-1$.
\end{lemma}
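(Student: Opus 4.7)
The plan is to reduce both vanishings to a single one via Riemann--Roch, and then exhibit the effective locus as a proper subvariety of the Picard variety.

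By Riemann--Roch, any line bundle $\cL$ of degree $g-1$ satisfies
\[
h^0(C,\cL) - h^1(C,\cL) = (g-1) - g + 1 = 0,
\]
so $h^0 = h^1$. Hence it suffices to show that the general $\cL \in \Pic^{g-1}(C)$ has no non-zero global sections.

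To establish this, I would consider the Abel--Jacobi morphism
\[
u \colon C^{(g-1)} \to \Pic^{g-1}(C), \qquad D \mapsto \cO_C(D),
\]
from the $(g-1)$-st symmetric power of $C$. Its image is exactly the locus of effective line bundles of degree $g-1$, that is, the locus of $\cL$ with $h^0(C,\cL) > 0$ (this is the theta divisor $\Theta$). Since $\dim C^{(g-1)} = g-1 < g = \dim \Pic^{g-1}(C)$, the image is a proper closed subvariety. Therefore a general $\cL$ lies outside $\Theta$ and satisfies $h^0(C,\cL) = 0$, as desired.

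The only thing that requires a line of justification is that the image of $u$ is closed, which follows from the properness of $C^{(g-1)}$, and that being non-effective is indeed an open condition, which is standard semicontinuity. No step appears to be a genuine obstacle: the content is entirely the numerical coincidence provided by Riemann--Roch together with the dimension count for the Abel--Jacobi map.
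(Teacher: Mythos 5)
Your proposal is correct and follows essentially the same route as the paper: Riemann--Roch gives $h^0=h^1$ in degree $g-1$, and the dimension count $\dim C^{(g-1)}=g-1<g=\dim\Pic^{g-1}(C)$ shows the general line bundle is non-effective. You merely make explicit the Abel--Jacobi map that the paper's proof leaves implicit.
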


\begin{proof}
Note that $\Pic^{g-1}(C)$ has dimension $g$, in particular, a general line bundle $\cL$ of degree $g-1$ is not effective, so $\sH^0(C,\cL)=0$.
The degree of $\cL$ was chosen in such a way that $\sH^1(C,\cL)=0$
by the Riemann--Roch theorem.
\end{proof}

\begin{remark}
Note that \autoref{lem:curves-immaculate} gives rise to many Ulrich line bundles on curves.
More generally, let $A$ be a very ample divisor and $\cE$ a locally free sheaf on $C$.
Then $\cE(A)$ is Ulrich with respect to $A$ if and only if $\sH^\bullet(C,\cE)=0$.

Hence the study of Ulrich bundles can be reduced to the search of locally free sheaves with no cohomology at all. 
Except for $C = \bP^1$, one can construct many indecomposable Ulrich bundles of higher rank;
for more details on the following classification, see \cite[\S 4.1]{Coskun-Intro}.

For $g=0$, that is, $C = \bP^1$, the only line bundle with no cohomology at all is $\cO_{\bP^1}(-1)$.
Recall that any locally free sheaf $\cE$ on $\bP^1$ splits into line bundles. So if $\cE$ is such that $\sH^\bullet(\bP^1,\cE)=0$, then it is isomorphic to $\bigoplus \cO_{\bP^1}(-1)$.

For $g=1$, we recall \cite[Thm. 5]{Atiyah}: for each $r \geq 1$, there is an indecomposable locally free sheaf $\cF_r$ of rank $r$ and of degree zero with $\sH^\bullet(C,\cF_r) \cong \kk$, additionally $\cF_r$ is unique up to isomorphism. Moreover, for $\cF_r \otimes \cL$ with $\cL$ a non-trivial line bundle of degree zero, we have $\sH^\bullet(C,\cF_r \otimes \cL) = 0$.
Hence, any locally free sheaf $\cE$ on an elliptic curve with $\sH^\bullet(C,\cE)=0$ splits into a direct sum of such $\cF_r \otimes \cL$.

For $g>2$, one can consider a direct sum of line bundles $\bigoplus \cL_i$ with $\sH^\bullet(C,\cL_i)=0$. Then a general deformation of this direct sum will yield an indecomposable locally free sheaf $\cE$ with $\sH^\bullet(C,\cE)=0$.
\end{remark}

As an application of \autoref{prop:bundles-general}, we get the following statement.

\begin{theorem}
\label{thm:bundles-curves}
Let $\pi \colon \bP(\cE) \to C$ be a projective bundle over a smooth projective curve $C$.
Moreover, let $D = \pi^* A + H$ be a very ample divisor with $H$ the relative hyperplane section. 

A locally free sheaf $\pi^* \cF(D)$ is Ulrich on $\bP(\cE)$ with respect to $D$ if and only if $\cF$ is a locally free sheaf on $C$ with 
$\sH^\bullet(C,\cF) = 0$.
In particular, for any very ample $A'$, $\cF(A')$ is Ulrich with respect to $A'$.
\end{theorem}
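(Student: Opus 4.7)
The plan is to obtain this theorem as an essentially immediate corollary of \autoref{prop:bundles-general}. For a curve $X = C$, we have $\dim(X) = 1$, so the index range $k = 0,\ldots,\dim(X)-2 = -1$ appearing in that proposition is empty. This means the auxiliary $\Hom^\bullet_X(\sym^k\cE, \cF(-c_1(\cE)-(n+1+k)A))$-vanishing conditions are vacuous, and all that is required of $\cF$ is the hypothesis $\sH^\bullet(C,\cF) = 0$. This gives the ``if'' direction of the stated equivalence.

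For the ``only if'' direction, I would use that if $\pi^*\cF(D)$ is Ulrich with respect to $D$, then by definition
\[
\sH^\bullet(\bP(\cE), \pi^*\cF(D)(-D)) = \sH^\bullet(\bP(\cE), \pi^*\cF) = 0.
\]
Since $\pi$ is a projective bundle, $\pi_* \cO_{\bP(\cE)} = \cO_C$ and $R^i\pi_*\cO_{\bP(\cE)} = 0$ for $i>0$, so by the projection formula $R\pi_*\pi^*\cF = \cF$. Hence the displayed cohomology on $\bP(\cE)$ coincides with $\sH^\bullet(C,\cF)$, and we recover the desired vanishing on the base.

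The final ``In particular'' statement is a direct unwinding of the definition of an Ulrich bundle: for a curve with very ample divisor $A'$, being Ulrich means $\sH^\bullet(C,\cF(A')(-A')) = \sH^\bullet(C,\cF) = 0$, which is precisely the hypothesis.

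There is no real obstacle here; the entire content is packaged in the dimension count that collapses the range of $k$, together with the standard fact that pushforward along a projective bundle is cohomologically trivial on structure sheaves. The only point worth spelling out carefully is the converse direction via the projection formula, since \autoref{prop:bundles-general} as stated presupposes $\sH^\bullet(X,\cF)=0$ and so does not by itself give that implication.
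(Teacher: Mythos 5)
Your proposal is correct and follows exactly the route the paper intends: the theorem is stated as a direct application of \autoref{prop:bundles-general}, with the range $k=0,\ldots,\dim(C)-2$ empty for a curve, and the paper gives no further proof. Your extra care in supplying the converse direction via $R\pi_*\pi^*\cF=\cF$ (since \autoref{prop:bundles-general} presupposes $\sH^\bullet(X,\cF)=0$) is a correct and worthwhile detail that the paper leaves implicit.
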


\begin{remark}
\label{rem:bundles-curves}
By \autoref{lem:bundles-veryample}, very ample divisors of the shape $\pi^* A + H$ exist on $\bP(\cE)$. 
Moreover, by \autoref{lem:curves-immaculate}, there are line bundles $\cL$ on $C$ such $\sH^\bullet(C,\cL) = 0$.
In particular, a projective bundle over a curve admits an abundance of Ulrich line bundles.
\end{remark}

\begin{remark}
The above result generalises \cite[Thm. 2.1]{Aprodu-etal}, where Ulrich line bundles on geometrically ruled surfaces over curves are classified.
Note that in \cite{Aprodu-etal}, also Ulrich bundles of rank two are constructed for any given polarisation of the ruled surface. This uses the Serre construction of locally free bundles, which does not generalise that easily to higher dimensions.

In \cite[Thm. 4.7, Cor. 4.10]{Aprodu-HML}, Ulrich bundles on rational normal scrolls $\bP\bigl( \bigoplus_i \cO_{\bP^1}(a_i) \bigr)$ with respect to polarisations of the form $\pi^*A +H$ are characterised. There it is shown that there are at most finitely many.

Moreover, in \cite[Thm. B]{Faenzi-Malaspina}, rigid Ulrich bundles on Hirzebruch surfaces of degree $\geq 5$ are described as extensions (more precisely: mutations) of Ulrich line bundles. This gives a curious connection between such Ulrich bundles and Fibonacci numbers.
Additionally, in the case that such a Hirzebruch surface is of degree $4$,
aCM sheaves of even rank are classified: there is a $\bP^1$-family of such sheaves, see \cite[Thm. A]{Faenzi-Malaspina}.

Finally, in \cite{Antonelli-Hirzebruch} a full classification of Ulrich bundles on Hirzebruch surfaces for any given polarisation is obtained.
\end{remark}

The following example can also be obtained as a consequence of the more general \cite[Thm. 2.1]{Aprodu-etal}.

\begin{example}
\label{ex:hirzebruch-ulrich}
Let 
$\bF_r = \bP(\cO_{\bP^1} \oplus \cO_{\bP^1}(r))$ be the $r$-th Hirzebruch surface with $r \geq 0$ and let $\pi \colon \bF_r \to \bP^1$ be the projection.
Up to isomorphism, the only line bundle $\cL$ on the base $\bP^1$ with $\sH^\bullet(\bP^1,\cL) = 0$ is $\cO_{\bP^1}(-1)$.

If we denote by $f$ a fibre of $\pi$ and by $C_+$ the rational curve on $\bF_r$ with $C_+^2 = r$,
then for $\cO_{\bF_r}(f) = \pi^* \cO_{\bP^1}(-1)$ we find that $\sH^\bullet(\bF_r, \cO_{\bF_r}(f)) = 0$.
Additionally, one can check that $D = af + C_+$ is ample, which for smooth projective toric varieties is equivalent to being very ample.
Hence \autoref{thm:bundles-curves} and \autoref{rem:Serre-Ulrich} imply that for this polarisation, the line bundles
\[
\cO_{\bF_r}((a-1)f+C_+) 
\quad\text{and}\quad
\cO_{\bF_r}((r-1+2a)f)
\]
are Ulrich.

For $r>0$ these give all Ulrich line bundles. To see this note that 
\[
\cO_{\bF_r}(-f),\ 
\cO_{\bF_r}(if - C_+) \text{ with $i \in \bZ$},\ 
\cO_{\bF_r}((r-2)f-2C_+),
\]
is a complete list of line bundles $\cL$ on $\bF_r$ with the property that $\sH^\bullet(\bF_r,\cL)=0$.
Therefore, the very ample divisor $D = af + bC_+$ with $a>0$ and $b>1$ does not give rise to any Ulrich line bundle.

For $r=0$, we find that $\bF_0 = \bP^1 \times \bP^1$ has precisely two families of line bundles $\cL$ with $\sH^\bullet(\bP^1\times\bP^1,\cL) = 0$:
\[
\cO_{\bF_0}(if-f'),\ 
\cO_{\bF_0}(-f+if') \text{ with $i \in \bZ$}
\]
where $f' = C_+$ is a fibre of the second ruling.
Hence given any very ample divisor $D = af+bf'$ with $a,b>0$, we get that 
\[
\cO_{\bF_0}((a-1)f+(2b-1)f')
\quad\text{and}\quad
\cO_{\bF_0}((2a-1)f+(b-1)f')
\]
are the Ulrich line bundles with respect to $D$.
\end{example}

\section{Projective bundles over surfaces}

In this section, we specialise to the question about Ulrich bundles on projective bundles over surfaces. Later we pay more attention to the case of the projective plane and Hirzebruch surfaces.

\begin{theorem}
\label{thm:bundles-surfaces}
Let $\pi \colon \bP(\cE) \to S$ be a projective bundle over a smooth projective surface $S$.
Moreover, let $D = \pi^* A + H$ be a very ample divisor with $H$ the relative hyperplane section. 

A locally free sheaf $\pi^* \cF(D)$ is Ulrich on $\bP(\cE)$ with respect to $D$ if and only if $\cF$ is a locally free sheaf on $S$ with 
\[
\sH^\bullet(S,\cF) = 0 = \sH^\bullet(S,\cF(-D'))
\]
where $D' \coloneqq \rank(\cE)\cdot A + c_1(\cE)$.
In particular, if $D'$ is very ample, then $\cF(D')$ is Ulrich with respect to $D'$.
\end{theorem}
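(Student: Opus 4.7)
The plan is to view this theorem as essentially an unpacking of \autoref{prop:bundles-general} in the surface case, since $\dim(S) = 2$ leaves only $k = 0$ to check.

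First I would invoke \autoref{prop:bundles-general}: assuming $\cF$ is locally free with $\sH^\bullet(S,\cF) = 0$, the sheaf $\pi^*\cF(D)$ is Ulrich on $\bP(\cE)$ with respect to $D$ if and only if
\[
\Hom^\bullet_{S}\bigl(\sym^k \cE, \cF(-c_1(\cE)-(n+1+k)A)\bigr) = 0
\]
for $k = 0, \ldots, \dim(S) - 2 = 0$. Thus only the case $k = 0$ needs to be addressed, where $\sym^0 \cE = \cO_S$ and the condition reduces to
\[
\sH^\bullet\bigl(S, \cF(-c_1(\cE)-(n+1)A)\bigr) = 0.
\]
Since $n + 1 = \rank(\cE)$, the twisting divisor $c_1(\cE) + (n+1)A$ coincides with $D' = \rank(\cE)\cdot A + c_1(\cE)$, so this is exactly $\sH^\bullet(S,\cF(-D')) = 0$. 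Combined with the hypothesis $\sH^\bullet(S,\cF) = 0$, this establishes the equivalence claimed in the theorem.

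For the ``in particular'' statement, I would simply verify the cohomological definition of an Ulrich bundle directly. If $D'$ is very ample on the surface $S$, then $\cF(D')$ is Ulrich with respect to $D'$ exactly when $\sH^\bullet(S, \cF(D') (-iD')) = 0$ for $i = 1, 2$, and these two vanishings are precisely $\sH^\bullet(S,\cF) = 0$ and $\sH^\bullet(S,\cF(-D')) = 0$, as obtained above.

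I do not expect any serious obstacle: the only non-trivial content is packaged inside \autoref{prop:bundles-general}, and the work here is the bookkeeping identification of $c_1(\cE) + \rank(\cE) A$ with $D'$, together with noticing that in dimension two the range of $k$ collapses to a single value. The mild conceptual point worth flagging in the write-up is that $\cF$ being locally free is automatically preserved under the stated conditions, so one really gets a clean characterisation of Ulrich bundles of the form $\pi^*\cF(D)$ in terms of two vanishing conditions on $S$.
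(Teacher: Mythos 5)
Your proposal is correct and matches the paper's proof, which simply cites \autoref{prop:bundles-general} as a direct application; your unpacking of the single case $k=0$, the identification $c_1(\cE)+(n+1)A = D'$, and the verification of the ``in particular'' clause are exactly the intended bookkeeping. The only point worth making explicit is that for the ``only if'' direction the vanishing $\sH^\bullet(S,\cF)=0$ is itself forced by Ulrichness (it is the $i=1$ twist $\sH^\bullet(\bP(\cE),\pi^*\cF(D)(-D))$), so the hypothesis of \autoref{prop:bundles-general} is not an extra assumption.
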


This statement generalises \cite[Thm. 2.4]{Fania-etal}, where the case of $\bP^1$-bundles over surfaces is treated.
Recall that by \autoref{lem:bundles-veryample}, a very ample divisor of the form $D = \pi^*A + H$ does always exist.

\begin{proof}
This is a direct application of \autoref{prop:bundles-general}.
\end{proof}

\begin{question}
\label{q:very-ampleness-bundle-base}
Already the connection between the ample divisors on a surface and ample divisors on projective bundles over the surface is quite complicated, see \cite{Misra-Ray}.
So it is not obvious, whether $D = \pi^*A + H$ (very) ample implies that also $D' = (n+1)A + c_1(\cE)$ is (very) ample, as well.
Examples suggest that this implication is true, but also show that the other implication is wrong in general; see \autoref{ex:blowupP3}.

On the other hand, by \autoref{lem:bundles-veryample}, we can choose $A$ positive enough such that $D$ and $D'$ are very ample.
\end{question}

\begin{example}
In \cite{Fania-etal}, many more Ulrich bundles of low rank are constructed on $\bP^1$-bundles over several surfaces, obtaining even a full classification in those cases of Ulrich line bundles.
\end{example}

\subsection{Projective bundles over $\bP^2$}

\begin{proposition}[{\cite{Coskun-Genc, Costa-Miro-Roig}}]
\label{prop:ulrich-p2}
For $d>0$, there are are Ulrich bundles on $\bP^2$ with respect to $\cO_{\bP^2}(d)$:
\begin{itemize}
\item[$d=1$:] direct sums of $\cO_{\bP^2}$;
\item[$d=2$:] direct sums of $\cT_{\bP^2}$;
\item[$d>2$:] locally free sheaves of rank at least $2$. For example, an Ulrich bundle $\cE$ of rank two fits into the short exact sequence
\[
0 \to \cO_{\bP^2}^{\oplus (d-1)}(d-2) \to \cO_{\bP^2}^{\oplus (d+1)}(d-1) \to \cE \to 0,
\]
which in the case of $d=2$ yields $\cT_{\bP^2}$.
\end{itemize}
\end{proposition}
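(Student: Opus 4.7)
The strategy is to verify case by case the cohomological definition: on $\bP^2$ with polarisation $\cO_{\bP^2}(d)$, a locally free sheaf $\cF$ is Ulrich exactly when $\sH^\bullet(\bP^2,\cF(-d))=0=\sH^\bullet(\bP^2,\cF(-2d))$. The argument reduces in each case to standard cohomology computations, together with, for $d>2$, a short genericity statement on a matrix of linear forms.

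For $d=1$, the structure sheaf $\cO_{\bP^2}$ is Ulrich because $\sH^\bullet(\bP^2,\cO(-i))=0$ for $i=1,2$. For $d=2$, I twist the Euler sequence $0 \to \cO_{\bP^2} \to \cO_{\bP^2}(1)^{\oplus 3} \to \cT_{\bP^2} \to 0$ by $\cO(-2)$ and by $\cO(-4)$. The first twist has outer terms $\cO(-2)$ and $\cO(-1)^{\oplus 3}$, both acyclic, so $\sH^\bullet(\cT_{\bP^2}(-2))=0$ at once. In the second twist, only $H^2$ of the outer line bundles is nonzero, and a count gives $h^2(\cO(-4)) = 3 = h^2(\cO(-3)^{\oplus 3})$; the connecting map is Serre-dual to the Euler map $\kk^3 \to H^0(\cO(1))$, which is an isomorphism, whence $\sH^\bullet(\cT_{\bP^2}(-4))=0$.

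For $d>2$, the candidate rank-two bundle $\cE$ is constructed as the cokernel of a general morphism
\[
\varphi \colon \cO_{\bP^2}(d-2)^{\oplus(d-1)} \to \cO_{\bP^2}(d-1)^{\oplus(d+1)}
\]
given by a $(d+1)\times(d-1)$ matrix of linear forms. I would first argue that for $\varphi$ general, the map is injective of constant rank, so the cokernel $\cE$ is locally free of rank two; this is an open condition, so it suffices to exhibit one explicit example, which can be produced by a Koszul-type pattern. Ulrich-ness is then read off from the twists of the defining sequence. Twisting by $\cO(-d)$ gives outer terms $\cO(-2)^{\oplus(d-1)}$ and $\cO(-1)^{\oplus(d+1)}$, both acyclic, yielding $\sH^\bullet(\cE(-d))=0$. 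Twisting by $\cO(-2d)$ leaves cohomology only in degree two for the outer terms, with matching dimensions $(d-1)\binom{d+1}{2}=(d+1)\binom{d}{2}$; the vanishing of $\sH^\bullet(\cE(-2d))$ then amounts to bijectivity of the connecting map $H^2(\cO(-d-2)^{\oplus(d-1)}) \to H^2(\cO(-d-1)^{\oplus(d+1)})$, which by Serre duality is identified with the multiplication map $H^0(\cO(d-2))^{\oplus(d+1)} \to H^0(\cO(d-1))^{\oplus(d-1)}$ induced by $\varphi$.

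The main obstacle is the $d>2$ case: producing a single $\varphi$ that is simultaneously a locally split injection of bundles and induces an isomorphism on the relevant $H^2$ groups. Since each of these conditions is open, it suffices to exhibit one explicit example, in practice through a structured sparsity pattern (for instance obtained from a Koszul block), after which a general $\varphi$ inherits both properties and the construction goes through uniformly in $d$.
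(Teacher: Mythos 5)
The paper does not actually prove this proposition --- it is quoted from \cite{Coskun-Genc, Costa-Miro-Roig} --- so your argument should be compared with the paper's own machinery rather than with a proof of this exact statement. Measured that way, your verification is essentially correct and is the Serre-dual of what the paper does in \autoref{lem:XXX} and \autoref{ex:ulrich-p2}: there the Ulrich bundle is realised as the \emph{kernel} of a general map $\cO(d-2)^{\oplus(d+1)} \to \cO(d-1)^{\oplus(d-1)}$, and dualising (cf.\ \autoref{rem:Serre-Ulrich}) turns that kernel presentation into exactly your cokernel presentation. Your $d=1$ and $d=2$ computations are fine (the map you call a ``connecting map'' is really the induced map on $H^2$, but its identification with the dual of the evaluation $\kk^3 \to \sH^0(\cO_{\bP^2}(1))$ is correct). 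For $d>2$ the bookkeeping checks out: $(d-1){d+1 \choose 2}=(d+1){d \choose 2}$, the degeneracy locus of a general $(d+1)\times(d-1)$ matrix of linear forms has expected codimension $3>2$ and is hence empty, and the remaining condition is generic bijectivity of the multiplication map $\sH^0(\cO(d-2))^{\oplus(d+1)} \to \sH^0(\cO(d-1))^{\oplus(d-1)}$. You defer the explicit witness for this last open condition; \autoref{lem:XXX} (with $n=2$ and its $d$ equal to your $d-2$) displays precisely such a matrix, and since the space of matrices is irreducible you do not even need one $\varphi$ witnessing both open conditions simultaneously --- separate witnesses suffice. Two points are not addressed if the proposition is read as more than an existence statement: you do not show that for $d=1,2$ \emph{every} Ulrich bundle is a direct sum of $\cO_{\bP^2}$, respectively of $\cT_{\bP^2}$ (the former follows from the linear-resolution characterisation, the latter is \cite[Ex.\ 3.1]{Aprodu-HML} as quoted in \autoref{ex:ulrich-p2}), and you do not rule out Ulrich line bundles for $d>2$ (easy: $\cO(a)$ Ulrich would force both $a-d$ and $a-2d$ to lie in $\{-1,-2\}$, impossible for $d\geq 2$).
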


The following lemma will be the basis for the construction of Ulrich bundles.

\begin{lemma}
\label{lem:XXX}
Let $d$ be a non-negative integers.
On $\bP^n$, there are locally free sheaves $\cF$ of rank $n$ or ${n+d \choose n-1}$ with
the properties that
\[
\sH^\bullet(\bP^n,\cF) = 0
\quad \text{and} \quad
\cF \in \sod{ \cO_{\bP^n}(d), \cO_{\bP^n}(d+1)}.
\]
\end{lemma}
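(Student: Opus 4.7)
The plan is to construct $\cF$ as the kernel of a sufficiently generic map
\[
\phi \colon \cO_{\bP^n}(d)^{\oplus a} \to \cO_{\bP^n}(d+1)^{\oplus b}
\]
for one of two specific choices of $(a,b)$. Taking $(a,b) = (n+d+1,\, d+1)$ will produce a bundle of rank $a-b=n$, while taking $(a,b) = \bigl(\binom{n+d+1}{n},\, \binom{n+d}{n}\bigr)$ will produce, via Pascal's rule $\binom{n+d+1}{n} - \binom{n+d}{n} = \binom{n+d}{n-1}$, a bundle of rank $\binom{n+d}{n-1}$. In both cases the identity $a(d+1) = b(n+d+1)$, equivalently $a\binom{n+d}{n} = b\binom{n+d+1}{n}$, holds by a direct computation.

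Interpreting $\phi$ as a $b \times a$ matrix of linear forms on $\bP^n$, I would first check that for a generic $\phi$ the degeneracy locus is empty, since its expected codimension $a - b + 1$ is in both cases at least $n+1$ (for the second case, using $\binom{n+d}{n-1} \geq n$). Hence $\phi$ is surjective as a map of sheaves, $\cF \coloneqq \ker \phi$ is locally free of rank $a-b$, and fits into a short exact sequence
\[
0 \to \cF \to \cO_{\bP^n}(d)^{\oplus a} \to \cO_{\bP^n}(d+1)^{\oplus b} \to 0.
\]
This immediately places $\cF$ in $\sod{\cO_{\bP^n}(d),\,\cO_{\bP^n}(d+1)}$, as this subcategory is triangulated. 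Moreover, since $d \geq 0$ forces $\sH^i(\cO(d)) = \sH^i(\cO(d+1)) = 0$ for $i \geq 1$, the long exact sequence collapses to the condition that $\sH^\bullet(\cF) = 0$ if and only if the induced map
\[
H^0(\phi) \colon H^0(\cO(d))^{\oplus a} \to H^0(\cO(d+1))^{\oplus b}
\]
is an isomorphism; the two sides are equidimensional by the numerical identity above.

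The main obstacle is to verify that both open conditions on $\phi$ --- empty degeneracy locus and isomorphism on $H^0$ --- are simultaneously realized by some $\phi$. Since $\Hom(\cO(d)^a, \cO(d+1)^b)$ is an irreducible affine space, it suffices to check that each of the two open loci is non-empty, and then take their intersection. The first non-emptiness is a standard dimension count on the universal degeneracy locus. For the second, I would either exhibit a specific example (in the rank $n$ case with $d=0$, the map $\phi = (x_0,\dots,x_n)$ from the Euler sequence works) or argue via upper semicontinuity that a general matrix of linear forms induces a surjective $H^0$-map, with surjectivity following from a Castelnuovo--Mumford regularity argument applied to the multiplication map on polynomials.
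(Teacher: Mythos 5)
Your construction is the same as the paper's: the kernel of a (generically) surjective map $\cO_{\bP^n}(d)^{\oplus a} \to \cO_{\bP^n}(d+1)^{\oplus b}$ with exactly the two choices $(a,b)=(n+d+1,d+1)$ and $(a,b)=\bigl({n+d+1 \choose n},{n+d \choose n}\bigr)$, and the reduction of both required properties to the single condition that $\sH^0(\phi)$ be an isomorphism between equidimensional spaces is also how the paper argues. The degeneracy-locus count and the long-exact-sequence bookkeeping are fine.

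The one step you have not actually secured is the non-emptiness of the locus where $\sH^0(\phi)$ is an isomorphism, for all $d\geq 0$ and for both formats. Your fallback -- surjectivity of the multiplication map $\sH^0(\cO(1))\otimes \sH^0(\cO(d)) \to \sH^0(\cO(d+1))$ -- shows that each \emph{row} of $\phi$ can be made surjective on global sections, but surjectivity of the direct-sum map $\sH^0(\cO(d))^{\oplus a} \to \sH^0(\cO(d+1))^{\oplus b}$ does not follow from row-wise surjectivity when the rows must share source summands; and they must, since $a = n+d+1 < (d+1)(n+1) = b(n+1)$ as soon as $d,n\geq 1$, so a block-diagonal matrix with disjoint rows is not available. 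Your explicit example (the Euler sequence) only covers $d=0$ in the rank-$n$ case. The paper closes exactly this gap with explicit witnesses: for $(a,b)=(n+d+1,d+1)$ it writes down a ``staircase'' matrix whose rows are shifted copies of $(x_0,\ldots,x_n)$, directly generalising \cite[Prop.~5.9]{Eisenbud-Schreyer-Weyman}, and for the second format it takes the $(d+1)$-st symmetric power of the Euler sequence twisted by $\cO(2d+1)$, which realises $\cF \cong \sym^{d+1}\Omega_{\bP^n}(2d+1)$ together with the required presentation. To complete your argument you should either import one of these explicit maps or cite a maximal-rank statement for generic matrices of linear forms; as written, the claimed generic surjectivity on $\sH^0$ is asserted rather than proved.
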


\begin{proof}
The easiest way to guarantee the second condition of the statement is to assume that $\cF$ fits into a short exact sequence
\[
0 \to \cF \to \cO(d)^{\oplus b_1} \xto{\alpha} \cO(d+1)^{\oplus b_2} \to 0,
\]
as for a surjective morphism between locally free sheaves, the kernel is automatically locally free.
By applying $\sH^\bullet(\bP^n,\blank)$ to this sequence, we obtain
\[
\begin{split}
0 &\to \sH^0(\bP^n,\cF) \to \sH^0(\bP^n,\cO(d))^{\oplus b_1} \xto{\sH^0(\alpha)} \sH^0(\bP^n,\cO(d+1))^{\oplus b_2} \to \\
&\to \sH^1(\bP^n,\cF) \to 0.
\end{split}
\]
Here we want to ensure that $\sH^0(\alpha)$ is an isomorphism to conclude that $\sH^\bullet(\bP^n,\cF) = 0$.
A necessary condition for this to happen is that
$b_1 \cdot \dim \sH^0(\bP^n,\cO(d)) = b_2 \cdot \dim \sH^0(\bP^n,\cO(d+1))$, or more explicitly that
\[
b_1 \cdot {n+d \choose n} = b_2 \cdot {n+d+1 \choose n}
\quad \iff \quad
(d+1) b_1 = (n+d+1) b_2
\]

In particular, $b_1 = n+d+1$ and $b_2 = d+1$ is a valid solution.
There are solutions with smaller values, if $\gcd(n,d+1) > 1$,
but for this choice we find that there a generic choice of
$\alpha \colon \cO(d)^{n+d+1} \to \cO(d+1)^{d+1}$
induces an isomorphism $\sH^0(\alpha)$. For example, we can take
\[
\cO^{n+d+1}(d) 
\xto{
\left(
\def\dots[#1]{ \ar[#1, dotted, no head, shorten=1mm]}
\begin{tikzcd}[row sep=tiny, column sep=small, ampersand replacement=\&]
x_0 \dots[rrrddd] \dots[rr] \&\& x_n \dots[rrrddd] \& 0 \dots[rr] \dots[rrdd] \&\& 0 \dots[dd] \\
0 \dots[rrdd] \dots[dd]\\ 
 \& \&\& \&\& 0 \\
0 \dots[rr] \&\& 0 \& x_0 \dots[rr] \&\& x_n
\end{tikzcd}
\right)
}
\cO^{d+1}(d+1) 
\]
which is a direct generalisation of \cite[Prop.\ 5.9]{Eisenbud-Schreyer-Weyman}.
Hence for $b_1 = n+d+1$ and $b_2 = d+1$, we obtain a locally free sheaf $\cF$ of rank $n$ with
\[
\sH^\bullet(\bP^n,\cF) = 0, \quad
\cF \in \sod{ \cO_{\bP^n}(d), \cO_{\bP^n}(d+1)}.
\]

Another choice for a solution is
$b_1 = {n+d+1 \choose n}$ and $b_2 = {n+d \choose n}$.
By the same kind of argument, we obtain for this choice,
a locally free sheaf $\cF$ of rank $b_1-b_2 = {n+d \choose n-1}$.
Here, we can construct also an explicit locally free sheaf.
For this, consider the Euler sequence
\[
0 \to \Omega \to \cO(-1)^{n+1} \to \cO \to 0.
\]
Taking the $(d+1)$-th symmetric power of this short exact sequence and tensoring with $\cO(2d+1)$ yields
\[
0 \to \sym^{d+1} \Omega(2d+1) \to \cO(d)^{\oplus {n+d+1 \choose n}} \to \cO(d+1)^{\oplus {n+d \choose n}} \to 0.
\]
By construction $\sym^{d+1} \Omega(2d+1) \in \sod {\cO(d),\cO(d+1)}$, and taking cohomology of the last short exact sequence shows also that $\sH^\bullet(\bP^n, \sym^{d+1} \Omega(2d+1))=0$.
\end{proof}

\begin{remark}
\label{ex:ulrich-p2}
Let $\cF$ be a locally free sheaf on $\bP^2$.
The conditions of \autoref{lem:XXX} amount to $\cF(d+2)$ being an Ulrich bundle with respect to $\cO(d+2)$. To see this, note that $0 = \sH^\bullet(\bP^2,\cF(-d-2)) = \Hom^\bullet(\cO(d+2),\cF)$ if and only if $\cF \in \cO(d+2)^\perp = \sod{\cO(d),\cO(d+1)}$.

On $\bP^2$, note that in the case $d=0$, we obtain the Ulrich bundle 
\[
\cF(2) \cong \Omega_{\bP^2}(3) \cong \cT_{\bP^2}
\]
of rank $2$ on $\bP^2$.
Actually, this is the only indecomposable Ulrich bundle with respect to $\cO_{\bP^2}(2)$, see \cite[Ex. 3.1]{Aprodu-HML}.
Moreover, we get for any $d > 0$ many Ulrich bundles of rank two. The construction in the proof of \autoref{lem:XXX} is the dual of the one given in \cite[Prop.\ 5.9]{Eisenbud-Schreyer-Weyman}.

Also note that the argument of \autoref{lem:XXX} can be extended to cover also the case $d=-1$, where we end up with the the Ulrich line bundle $\cO_{\bP^2}$.
\end{remark}

\begin{remark}
On $\bP^2$, in the case $d=1$, the proof of \autoref{lem:XXX} gives the locally free sheaf $\cF = \sym^2 \Omega_{\bP^2}(3)$, which appears in \cite[\S 3.4]{Hochenegger-Meachan} in a completely different context.
There the interest lies on ${}^\perp \cO_{\bP^2} \cap \cO_{\bP^2}^\perp$, which is a non-admissible subcategory of $\Db(\bP^2)$.
Note that the locally free sheaves inside ${}^\perp \cO_{\bP^2} \cap \cO_{\bP^2}^\perp$ are precisely the Ulrich bundles on $\bP^2$ with respect to the anticanonical polarisation $\cO_{\bP^2}(3)$.
\end{remark}

The following statement is a consequence of \autoref{thm:bundles-surfaces} and \autoref{lem:XXX}, by taking into account that for $D = \pi^* A + H$ we can replace $A$ by a more positive divisor, in order to guarantee that $D' = \rank(\cE)\cdot A + c_1(\cE)$ is very ample, see \autoref{lem:bundles-veryample}.

\begin{corollary}
\label{cor:P2}
Let $\pi \colon \bP(\cE) \to \bP^2$ be a projective bundle and let $D = \pi^*A+H$ be a very ample divisor such that $D' = \rank(\cE)\cdot A + c_1(\cE)$ is very ample, as well.
Then there are Ulrich bundles of rank two on $\bP(\cE)$ with respect to $D$.
\end{corollary}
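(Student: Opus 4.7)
The plan is to use \autoref{thm:bundles-surfaces} to translate the existence question into the construction of a rank-two sheaf on $\bP^2$ with a prescribed pair of cohomological vanishings, and then invoke \autoref{lem:XXX} to supply such a sheaf. By \autoref{thm:bundles-surfaces}, it suffices to exhibit a rank-two locally free sheaf $\cF$ on $\bP^2$ with
\[
\sH^\bullet(\bP^2,\cF) = 0 = \sH^\bullet(\bP^2,\cF(-D'));
\]
the resulting $\pi^*\cF(D)$ is then automatically a rank-two Ulrich bundle on $\bP(\cE)$ with respect to $D$, since $\pi^*\cF(D)$ has the same rank as $\cF$.

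Since $D'$ is very ample on $\bP^2$, I would write $D' = \cO_{\bP^2}(d')$ with $d' \geq 1$. The second vanishing $\sH^\bullet(\bP^2,\cF(-d')) = 0$ is the condition $\Hom^\bullet(\cO_{\bP^2}(d'),\cF) = 0$, and via the standard semi-orthogonal decomposition $\Db(\bP^2) = \sod{\cO(d'-2),\cO(d'-1),\cO(d')}$ this amounts to $\cF \in \sod{\cO(d'-2),\cO(d'-1)}$. For $d' \geq 2$, I then apply \autoref{lem:XXX} with $n=2$ and $d = d'-2 \geq 0$, taking the rank-$n = 2$ option: this produces a locally free sheaf $\cF$ of rank two on $\bP^2$ that \emph{simultaneously} satisfies $\sH^\bullet(\bP^2,\cF) = 0$ and $\cF \in \sod{\cO(d'-2),\cO(d'-1)}$, so both required vanishings hold at once. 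The boundary case $d' = 1$ can be handled by hand with $\cF = \cO_{\bP^2}(-1)^{\oplus 2}$, or sidestepped altogether by replacing $A$ with a sufficiently positive multiple (using \autoref{lem:bundles-veryample}) so that $d' \geq 2$ becomes automatic, as indicated in the paragraph preceding the statement.

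No real obstacle is anticipated: once \autoref{lem:XXX} is in hand, the corollary is essentially bookkeeping, matching its output to the two cohomological conditions demanded by \autoref{thm:bundles-surfaces}, with $D'$ in the role of $\cO_{\bP^2}(d+2)$. The only step that merits explicit mention is the dictionary between the second vanishing and the semi-orthogonality condition, since this is precisely what makes a lemma about sheaves in $\sod{\cO(d),\cO(d+1)}$ directly applicable here.
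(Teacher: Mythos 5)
Your proposal is correct and follows essentially the same route as the paper, which derives \autoref{cor:P2} directly from \autoref{thm:bundles-surfaces} together with \autoref{lem:XXX} (for $n=2$, $d=d'-2$), using exactly the dictionary between the vanishing of $\sH^\bullet(\bP^2,\cF(-d'))$ and the condition $\cF \in \sod{\cO(d'-2),\cO(d'-1)}$ that you spell out (cf.\ \autoref{ex:ulrich-p2}). Your explicit treatment of the boundary case $d'=1$ via $\cO_{\bP^2}(-1)^{\oplus 2}$ is a small addition the paper leaves implicit.
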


\begin{remark}
This corollary generalises the results from \cite[\S 5]{Fania-etal} about Ulrich bundles of rank two on $\bP^1$-bundles over $\bP^2$.
Note that in \cite[Prop. 5.1]{Fania-etal}, it is shown under which restrictions on the Chern classes of $\cE$, there are Ulrich line bundles.
\end{remark}

\begin{question}
This is again \autoref{q:very-ampleness-bundle-base}:
If $D = \pi^*A +H$ is very ample, do we have at least in the situation of the theorem above, that $D' = (n+1)A+c_1(\cE)$ is very ample?
What is the minimal degree $d_{\min}$ of $D'$ in dependence of the possible choices for $A$?
In the case that $d_{\min}$ is always positive, we have a connection between Ulrich bundles on $\bP(\cE)$ and those on $\bP^2$.
In situations where $d_{\min}=1$ is possible, $\pi^*\cO_{\bP^2}(-1) \otimes \cO_{\bP(\cE)}(D)$ would be an Ulrich line bundle on $\bP(\cE)$.
\end{question}

\begin{example}
\label{ex:blowupP3}
Let $X = (\bP^3)'$ be the blowup of $\bP^3$ in a point.
There is a map $\pi \colon X \to \bP^2$, realising $X$ as a projective bundle.
More precisely, one has $X = \bP(\cE)$ with $\cE = \cO_{\bP^2}(1) \oplus \cO_{\bP^2}$. 
Denoting by $h$ the pullback of a hyperplane section of $\bP^2$ via $\pi$, a divisor $D = ah+H$ is very ample if and only if $a > 0$.

By \autoref{thm:bundles-surfaces}, a locally free sheaf $\pi^*\cF(ah+H)$ is Ulrich with respect to $\cO_X(ah+H)$ if and only if $\cF(2a+1)$ is Ulrich on $\bP^2$ with respect to $\cO_{\bP^2}(2a+1)$.
For the minimal choice $a=1$, we therefore have Ulrich bundles of rank at least two, see \autoref{ex:ulrich-p2}.

Note that for the excluded value $a=0$, $\cO_{\bP^2}$ is an Ulrich line bundle on $\bP^2$ with respect to the very ample $\cO_{\bP^2}(1)$.
Moreover, for $\cF = \cO_{\bP^2}(-1)$, the cohomolgy groups $H^\bullet(X,\cL(-pD))$ vanish for the line bundle $\cL \coloneqq \pi^*\cF(H) = \cO_X(H-h)$ and $p=1,2,3$. 
But $\cO_X(H)$ is not even ample, so $\cO_X(H-h)$ is not Ulrich.

For a deeper picture about Ulrich bundles on $(\bP^3)'$, see \cite{Casnati-etal}.
\end{example}

\begin{example}
\label{ex:blowupP3b}
Consider $\cE = \cO_{\bP^2}(1) \oplus \cO_{\bP^2}^{\oplus d}$ and 
the projective bundle $\pi \colon \bP(\cE) \to \bP^2$.
As before, by \autoref{thm:bundles-surfaces}, 
a locally free sheaf $\pi^*F(ah + H)$ is Ulrich with respect to $ah + H$ if and only if $\cF((d + 1)a + 1)$ is Ulrich on $\bP^2$ with respect to $\cO_{\bP^2}((d + 1)a + 1)$. 
Again, the case $a = 0$ is not possible, as $H$ is not even ample, 
so we conclude that there are Ulrich bundles on X which have rank at least two.
\end{example}

\marginpar{{\small The proof  of \autoref{prop:ulrich-bundle-pn} contains an error, see the Erratum on page \pageref{Erratum} for further details.}}
\begin{proposition}
\label{prop:ulrich-bundle-pn}
Let $\cE = \cO_{\bP^n}(1) \oplus \cO_{\bP^n}^{\oplus d}$ and consider the projective bundle $\pi \colon \bP(\cE) \to \bP^n$.
Then there are Ulrich bundles of rank $n$ on $\bP(\cE)$ with respect to 
the very ample divisor $D = h+H$, where $h$ is the pullback of the hyperplane section of $\bP^n$.
\end{proposition}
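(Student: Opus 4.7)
My plan is to follow the framework from Proposition~\ref{prop:bundles-general}: find a rank-$n$ locally free sheaf $\cF$ on $\bP^n$ with $\sH^\bullet(\bP^n,\cF) = 0$ satisfying certain extra cohomology vanishings, so that $\pi^*\cF(D)$ becomes the desired Ulrich bundle.

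First, I would apply Remark~\ref{rem:split-bundle} to the split bundle $\cE = \cO_{\bP^n}(1) \oplus \cO_{\bP^n}^{\oplus d}$ and the polarisation $D = h + H$ (so $A = h$, $c_1(\cE) = h$). Setting $D_0 = h$ and $D_i = 0$ for $i \geq 1$, each sum $\sum_j D_{i_j}$ indexed by a subset only takes the value $0$ or $h$; a subset-indexed enumeration over $k = 0, \ldots, n-2$ then shows that the required extra vanishings reduce to $\sH^\bullet(\bP^n, \cF(-jh)) = 0$ for $j$ ranging over an explicit block of consecutive integers starting at $d+2$.

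Next, I would construct $\cF$ of rank $n$ by invoking Lemma~\ref{lem:XXX} on $\bP^n$ with the lemma's parameter $d_L$ chosen as a suitable function of $d$ and $n$ (the natural candidate being $d_L = d$). The lemma supplies $\sH^\bullet(\bP^n, \cF) = 0$ and the membership $\cF \in \sod{\cO_{\bP^n}(d_L), \cO_{\bP^n}(d_L + 1)}$; by Beilinson's semi-orthogonal decomposition of $\Db(\bP^n)$, this membership immediately gives $\sH^\bullet(\bP^n, \cF(-(d_L + i)h)) = 0$ for $i = 2, \ldots, n$. Calibrating $d_L$ so that these twists match the block from the previous step, Proposition~\ref{prop:bundles-general} then delivers that $\pi^*\cF(D)$ is Ulrich on $\bP(\cE)$ with respect to $D$, of rank $n$ since $\pi^*$ preserves rank.

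The delicate point, and where the real work lies, is the twist-matching in the previous paragraph: Beilinson gives only a run of $n - 1$ consecutive vanishings, whereas Remark~\ref{rem:split-bundle} may demand a slightly longer run, so the boundary twist has to be secured separately. I expect to handle it by a direct long-exact-sequence computation on the two-term resolution $\cF \hookrightarrow \cO_{\bP^n}(d_L)^{n+d_L+1} \twoheadrightarrow \cO_{\bP^n}(d_L+1)^{d_L+1}$ of Lemma~\ref{lem:XXX}, exploiting the standard vanishing range of line bundle cohomology on $\bP^n$ at the boundary degree. Fitting all the pieces together is the main technical hurdle, but the structural part of the argument is entirely dictated by Proposition~\ref{prop:bundles-general} and Lemma~\ref{lem:XXX}.
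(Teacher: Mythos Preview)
Your plan is the same route the paper takes: reduce via Remark~\ref{rem:split-bundle} to vanishing conditions on a sheaf $\cF$ on $\bP^n$, translate these through a Beilinson decomposition into membership in a two-term block $\sod{\cO(d_L),\cO(d_L+1)}$, and then supply $\cF$ from Lemma~\ref{lem:XXX}. The paper runs the bookkeeping with the lemma's parameter equal to $n$ rather than your $d$, and does not separate off a boundary twist.

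The step you flag as ``delicate'' is a genuine obstruction, and your proposed fix does not remove it. With $d_L=d$, the missing boundary is $j=d+n+1$; twisting the resolution $0\to\cF\to\cO(d)^{b_1}\to\cO(d+1)^{b_2}\to 0$ by $-(d+n+1)$ gives $0\to\cF(-d-n-1)\to\cO(-n-1)^{b_1}\to\cO(-n)^{b_2}\to 0$, and since $\sH^\bullet(\bP^n,\cO(-n))=0$ while $\sH^n(\bP^n,\cO(-n-1))\neq 0$, the long exact sequence yields $\sH^n\bigl(\cF(-d-n-1)\bigr)\cong\sH^n(\cO(-n-1))^{b_1}\neq 0$. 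Shifting $d_L$ by one only moves the failure to the other end of the block. Worse, your enumeration undercounts the constraints: $\sym^k\cE$ splits over \emph{multisets} $i_1\le\cdots\le i_k$, so for $\cE=\cO(1)\oplus\cO^{\oplus d}$ every summand $\cO(0),\ldots,\cO(k)$ occurs, and the required vanishings are $\sH^\bullet(\cF(-j))=0$ for $j=d+2,\ldots,d+2n-2$. For $n\ge 3$ this block has length $2n-3\ge n$, and already $n$ consecutive such conditions force $\cF\in\sod{\cO(d+1)}$ via Beilinson, hence $\cF\cong\cO(d+1)^{\oplus r}$, contradicting $\sH^\bullet(\cF)=0$. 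So for $n\ge 3$ no nonzero $\cF$ on $\bP^n$ can satisfy the hypotheses of Proposition~\ref{prop:bundles-general} in this situation, and the argument as you have outlined it cannot be completed by a long-exact-sequence patch.
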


\begin{proof}
We show that there is a locally free sheaf $\cF$ of rank $n$ on $\bP^n$,
such that $\pi^*\cF(D)$ is an Ulrich bundle on $\bP(\cE)$ with respect to $D$.
For this we will use \autoref{prop:bundles-general}
in its formulation for split locally free sheaves in \autoref{rem:split-bundle}.
The conditions for $\cF$ to fulfill are that $\sH^\bullet(\bP^n,\cF)=0$ and
$\sH^\bullet(\bP^n,\cF(-1-\delta-(n+1+k))) = 0$
with $\delta \in \{0,1\}$ and $0 \leq k \leq n-2$.
The latter vanishing can be expressed as
\[
\Hom^\bullet(\cO(n+2+k),\cF) = 0
\]
for $0 \leq k \leq n-1$.
Using that $\Db(\bP^n)=\sod{\cO(n),\cO(n+1),\cO(n+2),\ldots,\cO(2n+1)}$,
these conditions become
\[
\sH^\bullet(\bP^n,\cF) = 0
\quad\text{and}\quad
\cF \in \sod{\cO(n),\cO(n+1)}.
\]
So by \autoref{lem:XXX} there are locally free sheaves $\cF$ of rank $n$ on $\bP^n$ that satisfy these conditions.
In turn, this gives Ulrich bundles $\pi^*F(D)$ of rank $n$ on $\bP(\cE)$ with respect to $D=h+H$ by \autoref{prop:bundles-general}.
\end{proof}

\begin{remark}
\label{rem:blowup}
For $\cE = \cO_{\bP^n}(1) \oplus \cO_{\bP^n}^{\oplus d}$, the projective bundle $\bP(\cE)$ is isomorphic to the blowup $(\bP^{n+d})'$ of $\bP^{n+d}$ in a $(d-1)$-dimensional linear subspace.
So in \autoref{ex:blowupP3} \& \ref{ex:blowupP3b} and \autoref{prop:ulrich-bundle-pn},
there is also the possibility to construct Ulrich bundles via pullback using the map $(\bP^{n+d})' \to \bP^{n+d}$,
by using \cite[Thm. 3.1]{Casnati-Kim}.
\end{remark}

\subsection{Projective bundles over Hirzebruch surfaces}

\begin{proposition}[\cite{Aprodu-etal}]
Let $\pi \colon \bF_r \to \bP^1$ be the projection of Hirzebruch surface $\bF_r = \bP(\cO \oplus \cO(r))$ to $\bP^1$.
For any very ample $D = ah + bH$, where $h$ is the pullback of the hyperplane section of $\bP^1$, there exist
\begin{itemize}
\item Ulrich line bundles if and only if $b=1$,
\item Ulrich bundles of rank two if $b>1$
\end{itemize}
on $\bF_r$ with respect to $D$.
\end{proposition}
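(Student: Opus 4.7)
My plan is to split the proposition into three independent claims: existence of Ulrich line bundles when $b=1$, non-existence when $b>1$, and existence of rank-two Ulrich bundles when $b>1$.

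For the $b=1$ case of the first bullet, observe that $D = ah+H$ is precisely of the form $\pi^*A+H$ with $A = \cO_{\bP^1}(a)$, so \autoref{thm:bundles-curves} applies directly. Taking $\cF = \cO_{\bP^1}(-1)$, which has vanishing cohomology on $\bP^1$, the pullback twist $\pi^*\cO_{\bP^1}(-1)\otimes\cO_{\bF_r}(D)=\cO_{\bF_r}((a-1)f+H)$ is an Ulrich line bundle.

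For non-existence when $b>1$, I would use the explicit classification of line bundles with vanishing cohomology from \autoref{ex:hirzebruch-ulrich}: for $r>0$ the coefficient of $H=C_+$ in any such line bundle lies in $\{-2,-1,0\}$. Writing a candidate Ulrich line bundle as $\cM=mf+nH$, the conditions $\sH^\bullet(\bF_r,\cM(-iD))=0$ for $i=1,2$ force both $n-b$ and $n-2b$ into $\{-2,-1,0\}$, so that $b\le 2$. The boundary case $b=2$ pins down $n=2$, and the matching constraints on $m$ coming from the $f$-coefficient of the listed line bundles then contradict $a>0$; the case $r=0$ has to be handled separately via the Künneth decomposition on $\bP^1\times\bP^1$.

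For the rank-two construction when $b>1$, I would apply the Serre construction on $\bF_r$. The Chern character of a rank-two Ulrich bundle with respect to $D$ is determined by Riemann--Roch, which fixes the numerical invariants of a line bundle $\cL$ and the length of a zero-dimensional subscheme $Z\subset\bF_r$ entering an extension
\[
0 \to \cO_{\bF_r} \to \cE \to \cI_Z \otimes \cL \to 0.
\]
Local-freeness of $\cE$ is ensured by verifying the Cayley--Bacharach property of $Z$ with respect to $\cL+K_{\bF_r}$; after twisting by a suitable line bundle, the vanishing $\sH^\bullet(\bF_r,\cE(-iD))=0$ for $i=1,2$ is checked via the long exact sequence of the extension, using the known cohomology of line bundles on $\bF_r$ together with that of ideal sheaves of points in general position.

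The main obstacle is precisely this last step: arranging simultaneously the Cayley--Bacharach condition (for local-freeness) and the Ulrich cohomology vanishing (for being Ulrich) as the parameters $(a,b,r)$ vary. A general-position argument for $Z$ in a carefully chosen complete linear system should unlock both, but the cohomological bookkeeping is delicate. An alternative route would be to realise $\cE$ as a non-split extension of two cohomologically non-vanishing line bundles whose connecting homomorphism cancels the relevant cohomology exactly, in the spirit of the description of rigid Ulrich bundles on Hirzebruch surfaces via mutations of line bundles.
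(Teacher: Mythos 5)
The paper itself contains no proof of this proposition: it is imported verbatim from \cite{Aprodu-etal}, so there is no internal argument to compare against, and your proposal has to stand on its own. Your strategy does mirror the cited source — classify cohomology-free line bundles for the first bullet, use the Serre construction for the second — and two of your three pieces are sound. The $b=1$ existence claim follows exactly as you say from \autoref{thm:bundles-curves} with $\cF=\cO_{\bP^1}(-1)$, and your non-existence argument for $r>0$, $b>1$ is correct: the $C_+$-coefficients of the cohomology-free line bundles on $\bF_r$ lie in $\{0,-1,-2\}$, so demanding that both $\cM-D$ and $\cM-2D$ appear on the list forces $b\le 2$, and the case $b=2$ is killed by the incompatible constraints on the $f$-coefficient together with $a>0$.

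There are, however, two genuine gaps. First, the case $r=0$ is not something to be ``handled separately via K\"unneth'': the ``only if'' direction of the first bullet is actually \emph{false} there. As \autoref{ex:hirzebruch-ulrich} records, on $\bF_0=\bP^1\times\bP^1$ the line bundles $\cO_{\bF_0}((2a-1)f+(b-1)f')$ and $\cO_{\bF_0}((a-1)f+(2b-1)f')$ are Ulrich for \emph{every} very ample $D=af+bf'$, because the two families $\cO(if-f')$ and $\cO(-f+if')$ allow you to place $\cM-D$ in one family and $\cM-2D$ in the other; the rigidity you exploit for $r>0$ (a bounded set of $C_+$-coefficients) simply is not there. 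So either the proposition must be read with $r>0$ in the first bullet, or your proof plan cannot close — in any case you should have detected the conflict rather than deferring it. Second, for $b>1$ the rank-two construction is the entire content of the second bullet, and you explicitly leave its core unresolved: you do not specify the line bundle $\cL$, the length of $Z$, or carry out the verification that the Cayley--Bacharach condition and the vanishings $\sH^\bullet(\bF_r,\cE(-iD))=0$ for $i=1,2$ can be achieved simultaneously for all admissible $(a,b,r)$. Identifying the right tool is not the same as a proof; as written, the second claim is not established.
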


\begin{remark}
In \cite{Antonelli-Hirzebruch}, Ulrich bundles of arbitrary rank are described as cokernels of an injective map of split locally free sheaves.
\end{remark}

Again as an application of \autoref{thm:bundles-surfaces}, we arrive at the following statement.

\begin{corollary}
\label{cor:hirzebruch}
Let $\pi \colon \bP(\cE) \to \bF_r$ be a projective bundle and let $D = \pi^*A+H$ be a very ample divisor such that $D' = \rank(\cE)\cdot A + c_1(\cE)$ is very ample, as well.
Then there are Ulrich bundles of rank two on $\bP(\cE)$ with respect to $D$.
\end{corollary}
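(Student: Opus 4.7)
The plan is to reduce, exactly as in the proof of \autoref{cor:P2}, to the existence of a rank two Ulrich bundle on the base $\bF_r$ with respect to $D'$. Concretely, I would aim to exhibit a rank two locally free sheaf $\cF$ on $\bF_r$ satisfying the hypotheses of \autoref{thm:bundles-surfaces}, namely
\[
\sH^\bullet(\bF_r,\cF)=0=\sH^\bullet(\bF_r,\cF(-D')).
\]
Once such an $\cF$ is in hand, $\pi^*\cF(D)$ is automatically an Ulrich bundle of rank two on $\bP(\cE)$ with respect to $D$.

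To produce $\cF$, I would start with any rank two Ulrich bundle $\cG$ on $\bF_r$ with respect to the very ample divisor $D'$ and set $\cF \coloneqq \cG(-D')$. The defining Ulrich vanishing $\sH^\bullet(\bF_r,\cG(-iD'))=0$ for $i=1,2$ then translates directly into the two conditions required of $\cF$, and the rank is preserved under twisting.

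It remains only to guarantee the existence of a rank two Ulrich bundle on $\bF_r$ with respect to an arbitrary very ample $D'$, and this is where the preceding proposition from \cite{Aprodu-etal} enters. Writing $D' = ah + bH$ in the notation of that proposition (so $H$ temporarily denotes the relative hyperplane section of $\bF_r \to \bP^1$ rather than the one of $\bP(\cE)\to\bF_r$), with $a,b>0$, the case $b>1$ produces such a rank two bundle directly, while the case $b=1$ produces Ulrich line bundles, whose direct sum serves our purpose. I expect this small case distinction to be the only point requiring care; beyond it, the argument is a mechanical invocation of \autoref{thm:bundles-surfaces}.
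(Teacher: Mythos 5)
Your proposal is correct and follows exactly the route the paper intends: apply \autoref{thm:bundles-surfaces} to $\cF = \cG(-D')$ for a rank two Ulrich bundle $\cG$ on $\bF_r$ with respect to $D'$, whose existence comes from the cited proposition of \cite{Aprodu-etal} (taking a direct sum of two Ulrich line bundles when $b=1$). The paper leaves this as a one-line deduction, and your write-up supplies precisely the missing details, including the only point needing care, the case distinction on $b$.
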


\begin{remark}
For the base variety $\bF_0 = \bP^1 \times \bP^1$, there are Ulrich line bundles for any polarisation, so in this case we have even Ulrich line bundles on $\bP(\cE)$.

For base variety $\bF_r$ with $r>0$, the question remains under which conditions on $\cE$ there are Ulrich line bundles on $\bP(\cE)$.
Note that in \cite[Thm. 5.9]{Fania-etal}, it is shown that there are Ulrich bundles of rank two on a $\bP^1$-bundle over $\bF_1$.
\end{remark}

\begin{remark}
\label{rem:further-surfaces}
The method described in this section, establishes the existence of Ulrich bundles on projective bundles over a surface, where sufficiently many Ulrich bundles are known. 
We give a list of such surfaces, which is far from exhaustive, and some references:
\begin{itemize}
\item del Pezzo surfaces: \cite{Pons-Tonini, Coskun-Pezzo};
\item K3 surfaces: \cite{Coskun-K3, Watanabe, Aprodu-Farkas-Ortega, Casnati-Galuzzi};
\item surfaces with $p_q = 0$ and $q=0,1$: \cite{Casnati-q0, Casnati-q1}.
\end{itemize}
Moreover, most of the results of \cite{Fania-etal} about Ulrich bundles on $\bP^1$-bundles over certain surfaces should allow a generalisation to arbitrary projective bundles over these surfaces.
We leave the search for further Ulrich bundles to the interested readers.
\end{remark}

\newpage
\section*{Erratum}
\label{Erratum}

Unfortunately, there is an error in the proof of \autoref{prop:ulrich-bundle-pn}, which we recall here:

\begin{statement}{Proposition 5.13}[Wrong]
Let $\cE = \cO_{\bP^n}(1) \oplus \cO_{\bP^n}^{\oplus d}$ and consider the projective bundle $\pi \colon \bP(\cE) \to \bP^n$.
Then there are Ulrich bundles of rank $n$ on $\bP(\cE)$ with respect to
the very ample divisor $D = h+H$, where $h$ is the pullback of the hyperplane section of $\bP^n$.
\end{statement}

In the course of the proof, we have wanted to establish the existence of a locally free sheaf $\cF$ of rank $n$ that satisfies the conditions 
\begin{equation}
\tag{{$\ast$}}
\label{eq:conditions}
\sH^\bullet(\bP^n,\cF) = 0
\quad\text{and}\quad
\Hom^\bullet(\cO(n+2+k),\cF) = 0
\end{equation}
for $0 \leq k \leq n-1$.
From this one can conclude by \autoref{prop:bundles-general} that $\pi^*\cF(D)$ is an Ulrich bundle on $\bP(\cE)$ with respect to $D$.

At that point, we have erroneously claimed that $\Db(\bP^n)=\sod{\cO(n),\cO(n+1),\cO(n+2),\ldots,\cO(2n+1)}$, but one has to remove the first (or last) invertible sheaf to obtain a semi-orthogonal decomposition of $\Db(\bP^n)$.
By correctly using that $\Db(\bP^n)=\sod{\cO(n+1),\cO(n+2),\ldots,\cO(2n+1)}$,
the conditions of \eqref{eq:conditions} become
\[ 
\sH^\bullet(\bP^n,\cF) = 0
\quad\text{and}\quad
\cF \in \sod{\cO(n+1)}.
\]
But these conditions can only be satisfied for $\cF=0$, as any locally free sheaf in $\sod{\cO(n+1)}$ is of the form $\cO(n+1)^{\oplus m}$ for some $m$, so has non-zero global sections if $m>0$.

\medskip
\begin{center}
Hence \autoref{prop:ulrich-bundle-pn} as stated above is \emph{wrong}.
\end{center}
\bigskip

There are two ways to correct the statement using the same line of arguments, both yielding statements of less interest. On the one hand, we observe that the corrected proof shows the non-existence of a certain kind of Ulrich bundles on $\bP(\cE)$:

\begin{statement}{Proposition 5.13.A}
Let $\cE = \cO_{\bP^n}(1) \oplus \cO_{\bP^n}^{\oplus d}$ and consider the projective bundle $\pi \colon \bP(\cE) \to \bP^n$.
Moreover, let $D = h+H$ be the very ample divisor on $\bP(\cE)$, where $h$ is the pullback of the hyperplane section of $\bP^n$.
Then there is no Ulrich bundle on $\bP(\cE)$ with respect to $D$ of the form $\pi^*\cF(D)$, where $\cF$ is a locally free sheaf on $\bP^n$.
\end{statement}

On the other hand, the problem arises because the range for the index $k$ in \eqref{eq:conditions} is slightly too big. 
So in order to apply \autoref{rem:split-bundle}, one may look at $\cE = \cO_{\bP^n}^{\oplus (d+1)}$ and $\bP(\cE) = \bP^n \times \bP^d$. In this situation the conditions in \eqref{eq:conditions} are the same except that $0 \leq k \leq n-2$. Continuing with the same line of arguments as in the proof of \autoref{prop:ulrich-bundle-pn}, one arrives at a point where one can apply \autoref{lem:XXX}. Therefore, we obtain:

\begin{statement}{Proposition 5.13.B}
Let $\cE = \cO_{\bP^n}^{\oplus (d+1)}$ and consider the projective bundle $\pi \colon \bP(\cE) = \bP^n \times \bP^d \to \bP^n$.
Then there are Ulrich bundles of rank $n$ on $\bP(\cE)$ with respect to
the very ample divisor $D = h+H$, where $h$ is the pullback of the hyperplane section of $\bP^n$.
\end{statement}

If $n>1$, then this statement certainly does not yield Ulrich bundles of the lowest possible rank: there are Ulrich bundles of rank 1 for this polarization, see \cite[(3.5)]{beauville}.

\subsection*{Acknowledgements}
The author wants to thank Roberto Vacca for pointing out the error in the proof of \autoref{prop:ulrich-bundle-pn}.

\let\oldthebibliography=\thebibliography
\let\oldendthebibliography=\endthebibliography
\renewenvironment{thebibliography}[1]{%
    \oldthebibliography{#1}%
    \setcounter{enumiv}{28}%
}{\oldendthebibliography}

\BgThispage

\end{document}